\theoremstyle{plain}
\newtheorem{theorem}{Theorem}[section]
\newtheorem{proposition}[theorem]{Proposition}
\theoremstyle{definition}
\newtheorem{definition}[theorem]{Definition}
\newtheorem{example}[theorem]{Example}
\theoremstyle{remark}
\newtheorem{remark}{Remark}
\begin{document}

\title[Exact tests to compare contingency tables under qi and qs]{Exact tests to compare contingency tables under quasi-independence and quasi-symmetry}

\author{C. Bocci}
\address{Department of Information Engineering and Mathematics, University of Siena, Siena, Italy}
\email{cristiano.bocci@unisi.it}
\author{F. Rapallo}
\address{Department of Science and Technological Innovation, University of Piemonte Orientale, Alessandria, Italy}
\email{fabio.rapallo@uniupo.it}

\begin{abstract}
In this work we define log-linear models to compare several square contingency tables under the quasi-independence or the quasi-symmetry model, and the relevant Markov bases are theoretically characterized. Through Markov bases, an exact test to evaluate if two or more tables fit a common model is introduced. Two real-data examples illustrate the use of these models in different fields of applications.
\end{abstract}

\keywords{Algebraic Statistics, {M}arkov bases, {MCMC} algorithms, {R}ater agreement, {S}ocial mobility tables}
\subjclass[2010]{62H17}

\maketitle

\section{Introduction}

Complex models for contingency tables have received an increasing interest in the last decades from researchers and practitioners in different fields, from Biology to Medicine, from Economics to Social Science. For a general introduction to the statistical models for contingency tables see for instance \cite{agresti:13}, \cite{bishop:75} and \cite{kateri:14}. Quasi-symmetry and quasi-independence models are well known log-linear models for square contingency tables. Starting from Caussinus in \cite{caussinus:65}, several authors have considered such models from the point of view of both theory and applications, and it is impossible to give a complete account on all the papers where quasi-independence and quasi-symmetry are studied or used in data analysis. In the next section we will recall the basic facts on the quasi-independence and quasi-symmetry models, while for a full presentation and an historical overview the reader can refer to \cite{bishop:75} and \cite{goodman:02}. Quasi-symmetry is also the topic of a special issue of the {\it Annales de la Facult\'e des Sciences de {T}oulouse}, edited in 2002 by S. Fienberg and P. G. M. van der Heijden \cite{fienberg:02}.

Within Algebraic Statistics, quasi-independence and quasi-symmetry are very important models for contingency tables, for several reasons. We briefly review why the synergy between Algebraic Statistics and quasi-independence has been fruitful. Firstly, Algebraic Statistics provides and exact goodness-of-fit test based on the Diaconis-Sturmfels algorithm. Such test is very flexible when applied to complex models, and it allows us to make exact inference also outside the basic independence model, where the classical Fisher's exact test is available. When the sample size is small, the use of the asymptotic tests based on the chi-square approximation of the test statistics may lead to wrong conclusions, and this fact is even more relevant in this kind of models, where the asymptotics fails also with moderately large sample sizes, see an example in \cite{rapallo:03}. Secondly, under quasi-independence it is possible to fix the diagonal counts, or even to analyze incomplete tables where the diagonal counts (or an arbitrary subset of cells) are undefined or unavailable. To include structural zeros in the analysis, the notion of toric statistical model is a generalization of log-linear model that permits to study also the boundary. Toric models are described by non-linear polynomials, but in several cases it is possible to describe the geometry of such models, or at least it is possible to write their invariants through Computer Algebra systems. Quasi-independence and quasi-symmetry from the point of view of Algebraic Statistics can be found in \cite{rapallo:03}, \cite{aoki:05}, \cite{drton|sturmfels|sullivant:09}, and \cite{aoki:12}. Applications of quasi-symmetry to the problem of rater agreement in biomedical experiments are presented in \cite{rapallo:05}.

In this paper, we use classical techniques from Algebraic Statistics in order to compare several contingency tables under the quasi-independence and quasi-symmetry models. This is accomplished by the construction of a three-way table and by the definition of suitable log-linear models in order to determine if two or more tables fit a common quasi-independence (resp., quasi-symmetry) model, versus the alternative hypothesis that each table follows a specific quasi-independence (resp., quasi-symmetry) model with its own parameters. A third model is also introduced, as its matrix representation is a well known object in Combinatorics, namely the Lawrence lifting of a matrix. For the first two models, the relevant Markov bases are computed theoretically using a distance-reducing argument, while for the third model the Markov bases are characterized only in the case of two tables, and some advices are presented to efficiently run the exact test in the general case. We show two applications of these models on datasets coming from different areas: the first example comes from a rater agreement problem in a biomedical experiments, while the second one concerns the analysis of social mobility tables.

This research suggests several the future directions. From the point of view of Algebra, it would be interesting to study such models when the starting model  on each layer is different from the quasi-independence  and quasi-symmetry models, also including the study of their ideals. From the point of view of Statistics, it would be interesting to study the use of this technique to make inference on other measures of mobility based on log-linear models, also including one-sided tests and their semi-algebraic characterization. For an introductory overview of these measures, with several examples from surveys in European countries, refer to \cite{xie92} and \cite{breen07}.

The paper is organized as follows. In Sect.~\ref{recall-sect} we recall some definitions and basic results about log-linear models and toric models, with special attention to quasi-independence and quasi-symmetry. In particular, we collect here several scattered results on the Markov bases for these models. In Sect.~\ref{compa} we show how to define suitable log-linear models to compare two or more square tables. Given a base log-linear model, we define new log-linear models through the specification of their model matrices. For quasi-independence and quasi-symmetry on several tables, the Markov bases are theoretically computed. Sect.~\ref{example} is devoted to the illustration of two real-data examples.

\section{Markov bases for quasi-independence and quasi-symmetry}
\label{recall-sect}

In this section we recall some basic definitions and properties of log-linear models, with special attention to quasi-independence and quasi-symmetry for square two-way tables. A probability distribution on a finite sample space ${\mathcal X}$
with $K$ elements is a normalized vector of $K$ non-negative real
numbers. Thus, the most general probability model is the simplex
\[
\Delta = \left\{(p_{1}, \ldots, p_{K}) \ : \ p_{k} \geq 0 \ , \
\sum_{k=1}^K p_{k} = 1   \right\} \, .
\]
A statistical model  is therefore a subset of
$\Delta$.

A classical example of finite sample space is the case of a multi-way
contingency table where the cells are the joint counts of two or more
random variables with a finite number of levels each. In the case
of square two-way contingency tables, where the sample space is usually
written as a cartesian product of the form ${\mathcal X}=\{1,
\ldots , I\} \times \{1 , \ldots, I \}$ we will use the notation $p_{i,j}$ to ease the readability. In such case, the two categorical variables are denoted with $X$ and $Y$.

In the classical theory of log-linear models, under the Poisson
sampling scheme the cell counts are independent and identically
distributed Poisson random variables with expected values $Np_1, \ldots,
Np_K$, where $N$ is the sample size, and the statistical model
is  constraints on the raw parameters $p_1, \ldots, p_K$. A
model is log-linear if the log-probabilities lie in an affine
subspace of the vector space ${\mathbf R}^K$. Given $d$ real
parameters $\alpha_1, \ldots, \alpha_d$, a log-linear model is
described, apart from normalization, through the equations:
\begin{equation} \label{loglin}
\log (p_k) = \sum_{r=1}^d A_{k,r}\alpha_r
\end{equation}
for $k=1, \ldots, K$, where $A$ is the model matrix (or design matrix,  \cite{pistone|riccomagno|wynn:01}). Exponentiating Eq.~(\ref{loglin}), we obtain the expression of the corresponding
toric model
\begin{equation} \label{toric}
p_k = \prod_{r=1}^d \zeta_r^{A_{k,r}}
\end{equation}
for $k=1, \ldots , K$, where $\zeta_{r} = \exp(\alpha_r)$, $r=1,
\ldots, d$, are new non-negative parameters. Allowing the $\zeta_r$'s to be non-negative (instead of strictly positive) leads us to consider also the boundary of the models, with points having some entries equal to zero. It follows
that the model matrix $A$ is also the matrix
representation of the minimal sufficient statistic of the model. The matrix representation
of the toric models as in Eq.~(\ref{toric}) is widely discussed
in, e.g., \cite{rapallo:07} and
\cite{drton|sturmfels|sullivant:09}. It is easy to see from Eq.~(\ref{loglin}) that different model matrices with the same image as vector space generate the same log-linear model.

In the two-way case, the simplest (and widely studied) log-linear model is the independence model, which models the stochastic independence between the two categorical variables $X$ and $Y$. Its log-linear form is
\begin{equation} \label{indep}
\log(p_{i,j}) = \mu + \alpha_i^{(X)} + \beta_j^{(Y)}
\end{equation}
with the constraints
\begin{equation} \label{const-indep}
\sum_{i=1}^I \alpha_i^{(X)} = 0 \, , \qquad \sum_{j=1}^I \beta_j^{(Y)} = 0 \, .
\end{equation}

Quasi-independence and quasi-symmetry are both derived from the independence mod\-el adding constraints on given subsets of cells (typically, the cells on the main diagonal) and constraints on the symmetry of the table. Although quasi-independence can be defined for general rectangular tables with fixed counts on an arbitrary subset of ${\mathcal X}$, see \cite{aoki:12}, here we restrict our attention to the case of square tables with fixed counts on the main diagonal. The log-linear form of the quasi-independence model is
\begin{equation} \label{qi}
\log(p_{i,j}) = \mu + \alpha_i^{(X)} + \beta_j^{(Y)} + \gamma_{i}\delta_{i,j}
\end{equation}
where $\delta_{i,j}$ is the Kronecker delta. Also in the quasi-independence model the constraints in Eq.~\eqref{const-indep} hold.

The log-linear form of the quasi-symmetry model is
\begin{equation} \label{qs}
\log(p_{i,j}) = \mu + \alpha_i^{(X)} + \beta_j^{(Y)} + \gamma_{i,j}
\end{equation}
with the constraints
\[
\sum_{i=1}^I \alpha_i^{(X)} = 0 \, , \qquad \sum_{j=1}^I \beta_j^{(Y)} = 0 \, , \qquad \gamma_{i,j} = \gamma_{j,i} \, , \ i,j=1, \ldots, I \, .
\]
In Eq.~(\ref{qs}), the $\alpha^{(X)}_i$ are the parameters of the row effect, the $\beta^{(Y)}_j$ are the parameters of the column effect, while the parameters $\gamma_{i,j}$ force the quasi-symmetry. Comparing Equations (\ref{loglin}) and (\ref{qs}) it is easy to explicitly write the model matrix $A_{\rm qs}$ for the quasi-symmetry model. The first non-trivial example of quasi-independence and quasi-symmetry models is the $3 \times 3$ case, and in this first case the two models coincide as log-linear models. A possible choice is reported in Fig.~\ref{fig-mat}.
\begin{figure}[htbp]
\centering
\[
A^t_{\rm qs} = \left(\begin{array}{ccccccccc}1 & 1 & 1 & 1 & 1 & 1 & 1 & 1 & 1  \\
                                             1 & 1 & 1 & 0 & 0 & 0 & 0 & 0 & 0  \\
                                             0 & 0 & 0 & 1 & 1 & 1 & 0 & 0 & 0  \\
                                             0 & 0 & 0 & 0 & 0 & 0 & 1 & 1 & 1  \\
                                             1 & 0 & 0 & 1 & 0 & 0 & 1 & 0 & 0  \\
                                             0 & 1 & 0 & 0 & 1 & 0 & 0 & 1 & 0  \\
                                             0 & 0 & 1 & 0 & 0 & 1 & 0 & 0 & 1  \\
                                             0 & 1 & 0 & 1 & 0 & 0 & 0 & 0 & 0  \\
                                             0 & 0 & 1 & 0 & 0 & 0 & 1 & 0 & 0  \\
                                             0 & 0 & 0 & 0 & 0 & 1 & 0 & 1 & 0  \\
                                             1 & 0 & 0 & 0 & 0 & 0 & 0 & 0 & 0  \\
                                             0 & 0 & 0 & 0 & 1 & 0 & 0 & 0 & 0  \\
                                             0 & 0 & 0 & 0 & 0 & 0 & 0 & 0 & 1 \end{array}   \right) \, .
\]
\caption{\label{fig-mat}The model matrix of the quasi-symmetry model for $I=3$.}
\end{figure}

Notice that in $A^t_{\rm qs}$ each column represents a cell of the table (the cells are ordered lexicographically for convenience), and each row represents a parameter. Analyzing the structure of $A^t_{\rm qs}$, the first $7$ rows of $A^t_{\rm qs}$ form the model matrix of the independence model $A^t_{\rm ind}$, while the last three rows of $A^t_{\rm qs}$ define one real parameter for each diagonal cell, and hence force the diagonal cells to be fitted exactly. This parametrization is redundant, since the model has $1$ degree of freedom, and therefore $8$ parameters are sufficient to describe the model. The ideal of the independence model with model matrix $A_{\rm ind}$ is the set of all $2 \times 2$ minors of the table of probabilities, i.e.,
\begin{multline} \label{minors}
{\mathcal I}_{A_{\rm ind}}= \mathrm{Ideal}
(p_{1,1}p_{2,2}-p_{1,2}p_{2,1}, \ p_{1,1}p_{2,3}-p_{1,3}p_{2,1},
\ p_{1,1}p_{3,2}-p_{1,2}p_{3,1}, \ \\
p_{1,1}p_{3,3}-p_{1,3}p_{3,1}, \ p_{1,2}p_{2,3}-p_{1,3}p_{2,2}, \
p_{1,2}p_{3,3}-p_{3,2}p_{2,3}, \  \\
p_{2,1}p_{3,2}-p_{3,1}p_{2,2}, \ p_{2,1}p_{3,3}-p_{3,1}p_{2,3}, \
p_{2,2}p_{3,3}-p_{3,2}p_{2,3} ) \, ,
\end{multline}
while for the quasi-independence model and for the quasi-symmetry model from the matrix $A_{\rm
qs}$ we have only one binomial:
\[
{\mathcal I}_{A_{\rm qind}}= {\mathcal I}_{A_{\rm qs}} = \mathrm{Ideal}
(p_{1,2}p_{2,3}p_{3,1}-p_{1,3}p_{3,2}p_{2,1} ) \, .
\]
From the ideals above one can easily derive the corresponding Markov bases. Given a model matrix $A$, recall that a move is a table $m$ with integer entries such that $Am=0$, and that a set of moves ${\mathcal M}_A$ is a Markov basis if all fibers
\[
{\mathcal F}_{A,b} = \{ f \in {\mathbb N}^k \ : \ A^t f= b \}
\]
are connected. Following \cite{drton|sturmfels|sullivant:09}, from the point of view of computations, the easiest way to build a Markov basis is to compute the binomials in a system of generators of the toric ideal ${\mathcal I}_A$ of $A$ and to transform such binomials through the logs: $x^{m^+}-x^{m^-} \mapsto \pm m= \pm(m^+-m^-)$. For instance, the ideal ${\mathcal I}_{A_{\rm qs}}$ yields a Markov basis with only two moves:
\[
m=\pm \left(\begin{array}{ccc} 0  & +1 & -1 \\
                           -1 &  0 & +1 \\
                           +1 & -1 & 0
                           \end{array}\right) \, .
\]

To conclude this section, we collect some results on Markov bases for quasi-indepen\-dence and quasi-symmetry models to be found in \cite{aoki:05}, \cite{aoki:12}, \cite{drton|sturmfels|sullivant:09}, and \cite{rapallo-yoshida:10}.

A loop of degree $r$ on ${\mathcal X}$ is an $I
\times I$ a move $m= \pm m_r(i_1, \ldots , i_r ; j_1, . . . , j_r
)$ for $1 \leq i_1, \ldots , i_r \leq I$, $1 \leq j_1,
\ldots , j_r \leq I$, where $m_r (i_1, \ldots , i_r ; j_1, \ldots
, j_r )$ has entries
\begin{equation*}
\begin{array}{c}
m_{i_1,j_1} = m_{i_2,j_2} = \ldots = m_{i_{r-1},j_{r-1}} = m_{i_r,j_r} = 1,\\
m_{i_1,j_2} = m_{i_2,j_3} = \ldots = m_{i_{r-1},j_r} = m_{i_r,j_1} = -1,
\end{array}
\end{equation*}
and all other elements are zero. The indices $i_1, i_2,
\ldots $, are all distinct, as well as the indices $j_1, j_2, \ldots$, i.e.
\begin{equation*}
i_m \not = i_n \mbox{ and } j_m \not = j_ n \mbox{ for all } m
\not = n \, ,
\end{equation*}

A loop of degree $2$, $M_2(i_1, i_2; j_1, j_2)$, is called a basic move, and a loop $m_r$ is called df $1$ if its support does not contain the support of any other loop. A loop $m_r$ is called a symmetric loop if $\{i_1, \ldots ,i_r\} = \{j_1, \ldots, j_r \}$.

The first result concerns quasi-independence with possible structural zeros. Let $\mathcal S$ be the set of structural zeros.

\begin{proposition}\label{aokitakemura}
The set of df $1$ loops of degree $2, \ldots , I$ with support on ${\mathcal X} \setminus {\mathcal S}$
forms a unique minimal Markov basis for $I \times I$
contingency tables under the quasi-independence model with possible structural zeros. When the fixed cells of the table are located only on the main diagonal, the minimal Markov basis is formed by the df $1$ loops of degree $2$ and $3$.
\end{proposition}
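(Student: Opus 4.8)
The plan is to combine a description of the moves of the model with a distance‑reducing (Diaconis--Sturmfels) argument for sufficiency and an indispensability argument for minimality and uniqueness; the full details can be recovered from \cite{aoki:05}, \cite{aoki:12}, \cite{drton|sturmfels|sullivant:09} and \cite{rapallo-yoshida:10}. The quasi‑independence model with structural zeros $\mathcal S$ is the obvious extension of Eq.~\eqref{qi}, with one free parameter for each cell of $\mathcal S$ in place of the diagonal parameters, and reading the minimal sufficient statistic off its model matrix shows that it records the row sums, the column sums and the individual counts in the cells of $\mathcal S$. Hence an integer table $m$ is a move if and only if it is supported on $\mathcal X\setminus\mathcal S$ and has all row and column sums equal to zero; each df $1$ loop of degree $r$ with support avoiding $\mathcal S$ is visibly such a move, since every row index and every column index occurring in a loop carries exactly one $+1$ and one $-1$, and $r\le I$ is automatic because the row indices of a loop are distinct.

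For sufficiency I would fix $f\neq g$ in a common fiber, so that $m=f-g$ is a nonzero move supported on $\mathcal X\setminus\mathcal S$, and regard $\mathrm{supp}(m)$ as the edge set of a bipartite graph on the row‑ and column‑indices, each edge bearing the sign of the corresponding entry of $m$. Since all row and column sums of $m$ vanish, every vertex of positive degree meets both a positively and a negatively signed edge, so starting from such a vertex one can follow a sign‑alternating walk until it first revisits a vertex; this yields a sign‑alternating cycle inside $\mathrm{supp}(m)$, and a cycle of inclusion‑minimal support among these is the support of a df $1$ loop $m_r$ whose signs agree with $m$ on that support. Subtracting the accordingly signed copy of $m_r$ from $f$ creates no negative entry --- where $(m_r)_{i,j}=+1$ one has $m_{i,j}\ge 1$, hence $f_{i,j}=g_{i,j}+m_{i,j}\ge 1$ --- so $f-m_r$ stays in the same fiber while $\|(f-m_r)-g\|_1=\|m\|_1-|\mathrm{supp}(m_r)|<\|f-g\|_1$. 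Induction on $\|f-g\|_1$ then shows the df $1$ loops connect every fiber, i.e.\ they form a Markov basis.

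For minimality and uniqueness I would appeal to indispensability. For a df $1$ loop $m_r$, the fiber of $m_r^+$ forces every row and column not used by $m_r$ to vanish, so on the remaining $r\times r$ sub‑table, with the cells of $\mathcal S$ deleted, each element of the fiber is a $0/1$ table with unit row and column sums; the df $1$ condition leaves only two such tables, namely $m_r^+$ and $m_r^-$. A two‑element fiber forces $\pm m_r$ into every Markov basis, so the df $1$ loops constitute the unique minimal Markov basis, which is the first assertion. When $\mathcal S$ is the main diagonal, moreover, a df $1$ loop of degree $r\ge 4$ cannot exist: its $r\times r$ sub‑table loses at most one (diagonal) cell per row, so any two of its rows retain at least $r-2\ge 2$ columns in common, and those two rows and columns span a fully present $2\times 2$ block, i.e.\ the support of a basic move lying inside the loop's sub‑table, contradicting the df $1$ property. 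Hence only degrees $2$ and $3$ survive, which is the second assertion.

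The technical heart is the extraction step: one has to be sure that the support of an arbitrary move contains a loop whose signs are \emph{compatible} with the move and which is itself df $1$ of the stated degree. With the signed bipartite‑graph picture in place this is essentially bookkeeping, but keeping the sign conventions and the behaviour on the cells of $\mathcal S$ consistent throughout is where the care lies; the remaining steps are a direct computation with the model matrix together with the standard characterization of indispensable moves by two‑element fibers.
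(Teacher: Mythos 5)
Your overall architecture (characterizing the moves, distance reduction for sufficiency, two-element fibers for indispensability) is reasonable, and the indispensability argument and the degree bound in the diagonal case are essentially sound; note, though, that the paper does not prove this proposition at all --- it is quoted from \cite{aoki:05} and \cite{aoki:12} --- so the only benchmark is the original argument there. The genuine gap is in the sufficiency step, precisely at the point you dismiss as bookkeeping, and it is tied to the meaning of ``df $1$''. Under the paper's literal wording (the support of the loop contains no other loop's support), every loop of degree $r\ge 3$ is df $1$, because its support is a chordless $2r$-cycle in the bipartite graph; with that reading your extraction step goes through but the proposition itself would be false (the asserted basis would contain all loops, and the restriction to degrees $2$ and $3$ in the diagonal case would not follow). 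The reading you correctly use in your last paragraph --- the $r\times r$ sub-table spanned by the loop's rows and columns, with the cells of $\mathcal S$ deleted, contains no loop other than $\pm m_r$ --- is the one under which the proposition is true; but under that reading a sign-alternating cycle of inclusion-minimal support inside $\mathrm{supp}(f-g)$ need \emph{not} be df $1$, because the obstruction lives in cells of the sub-table at which $f-g$ vanishes, which minimality inside $\mathrm{supp}(f-g)$ cannot detect.

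Concretely, let $\mathcal S$ be the main diagonal of a $4\times 4$ table and take $f=m^+$, $g=m^-$ for the degree-$4$ loop $m=m_4(1,2,3,4;2,3,4,1)$, all eight of whose cells are off-diagonal. The only sign-alternating cycle in $\mathrm{supp}(f-g)$ is the $8$-cycle itself, so your extraction returns $m$; but $m$ is not df $1$ --- its sub-table contains the basic move on rows $\{1,2\}$ and columns $\{3,4\}$ --- and indeed $m$ is excluded from the basis by the second half of the proposition. (The fiber here is the set of $4\times 4$ derangement matrices, and connecting $f$ to $g$ requires passing through a third derangement, e.g.\ subtracting the df $1$ loop $m_3(1,3,4;2,4,1)$ first and then a basic move.) The missing ingredient is therefore a reduction lemma: a loop that is not df $1$ splits, along a smaller loop contained in its sub-table, into two loops of strictly smaller degree, and at least one order of applying these keeps every intermediate table non-negative; iterating this turns any loop move into a path of df $1$ loop moves within the fiber. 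That inductive splitting, not the cycle extraction, is the technical heart of the Aoki--Takemura proof, and it genuinely uses cells outside $\mathrm{supp}(f-g)$, so it cannot be recovered from the signed-bipartite-graph picture alone.
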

For the quasi-symmetry model we have the following

\begin{proposition}\label{drtonexercise}
The set of symmetric loops of degree $3, \ldots , I$ with support outside  the main diagonal form a unique minimal Markov basis for $I \times I$
contingency tables with structural zeros under the quasi-symmetry model. Such set of moves is also a Graver basis.
\end{proposition}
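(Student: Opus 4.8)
The plan is to reduce the statement to a classical fact about integer circulations on a graph, so the first step is to read off what a move for the quasi-symmetry model is. Since each diagonal cell carries its own parameter $\gamma_{i,i}$ (so the diagonal counts are fitted exactly) and the off-diagonal parameters satisfy $\gamma_{i,j}=\gamma_{j,i}$, a move $m$ must have vanishing row sums, vanishing column sums, $m_{i,i}=0$, and $m_{i,j}=-m_{j,i}$ for $i\neq j$, together with $m_{i,j}=0$ whenever $(i,j)\in\mathcal S$. Reading $m_{i,j}$ (for $i<j$) as a signed flow along the edge $\{i,j\}$, this says precisely that the moves of the quasi-symmetry model are the integer circulations of the graph $G$ obtained from the complete graph on $\{1,\ldots,I\}$ by deleting the edges coming from structural zeros --- the conservation law at a vertex $i$ being exactly the vanishing of the $i$-th row sum. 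Under this dictionary a symmetric loop of degree $r$ with support off the main diagonal corresponds to the $\pm 1$ circulation running once around a simple cycle of length $r$ in $G$, and the simple cycles of $G$ have lengths between $3$ and $I$. So the proposition amounts to the assertion that the simple-cycle circulations form both the unique minimal Markov basis and the Graver basis of the circulation lattice of $G$.

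For the Markov and Graver statements I would use the standard conformal flow decomposition: a nonzero integer circulation $m$ always contains a conformal simple-cycle circulation $m_C$, that is, one which is sign-compatible with $m$ and dominated by $m$ entrywise. (Reorient every edge in the direction of its flow so that $m\geq0$; conservation forces the subgraph of positively-weighted edges to contain a directed cycle; extract a simple one.) Given $f\neq g$ in a common fiber $\mathcal F_{A_{\rm qs},b}$, apply this to the circulation $m=f-g$: sign-compatibility makes $f-m_C\geq0$ automatic, $f-m_C$ lies again in the fiber, and $\|(f-m_C)-g\|_1=\|f-g\|_1-\|m_C\|_1<\|f-g\|_1$; induction on $\|f-g\|_1$ then gives connectivity, so the simple-cycle moves form a Markov basis. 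The same decomposition shows that any circulation not of the form $\pm m_C$ has a proper nonzero conformal sub-move and so fails to be primitive, whereas a simple-cycle move is primitive because a circulation supported on the edges of a single cycle must be an integer multiple of that cycle; hence the simple-cycle moves are exactly the primitive vectors, i.e.\ the Graver basis.

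It then remains to strengthen ``Markov basis'' to ``unique minimal Markov basis'', which I would do by proving that each simple-cycle move is indispensable. For a simple cycle $C=(v_1\to\cdots\to v_r\to v_1)$ write $m_C=m_C^{+}-m_C^{-}$, where $m_C^{+}$ and $m_C^{-}$ are the two ``rotation'' permutation matrices supported on $\{v_1,\ldots,v_r\}$. I would check that the fiber of $m_C^{+}$ has only these two elements: a non-negative integer table with the margins of $m_C^{+}$ is forced to be a permutation matrix on $\{v_1,\ldots,v_r\}$ with zero diagonal, and then the symmetrized-sum constraints force it to traverse each edge of $C$ exactly once and to use no other edge, so it must be one of the two cyclic orientations of $C$. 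A two-element fiber makes $m_C$ indispensable, hence a member of every Markov basis; together with the previous paragraph this identifies the simple-cycle moves with the unique minimal Markov basis, which we have already seen coincides with the Graver basis.

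The conceptual heart of the proof --- and the step I would handle most carefully --- is the opening translation; once the quasi-symmetry moves are recognised as the integer circulations of a graph, the rest is the familiar description of the Graver basis of a graphic lattice by its simple cycles. The only genuinely model-specific computation is the two-element-fiber check, in which the symmetrized-sum constraints are exactly what rule out every permutation matrix except the two rotations of the cycle; the structural zeros enter only there, and only to guarantee that the cycles in question avoid $\mathcal S$.
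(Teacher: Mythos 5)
The paper gives no proof of this proposition: it is stated as a known result collected from the cited references (Aoki--Takemura, Drton--Sturmfels--Sullivant, Rapallo--Yoshida), so there is no in-paper argument to measure yours against. Judged on its own terms, your proof is correct and complete, and it follows the route the cited literature intends. The opening translation is the right one: since each $\gamma_{i,i}$ is a free parameter and $\gamma_{i,j}=\gamma_{j,i}$, the sufficient statistics are the margins, the diagonal counts and the symmetrized sums $f_{i,j}+f_{j,i}$, so the move lattice is exactly the set of antisymmetric integer matrices with zero diagonal, zero row sums and support off $\mathcal S$, i.e.\ the integer circulations of the graph $G$. The conformal cycle decomposition then delivers both the Markov-basis property (via distance reduction, sign-compatibility guaranteeing nonnegativity of $f-m_C$) and the Graver-basis property (primitivity of the $\pm 1$ cycle circulations, since the cycle space of a single simple cycle has rank one), and your two-element-fiber computation for $m_C^{+}$ is the standard indispensability certificate that upgrades ``Markov basis'' to ``unique minimal Markov basis''. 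I checked the fiber argument: the margins force a permutation matrix on the cycle's vertex set, the diagonal statistics exclude fixed points, and the symmetrized sums exclude transpositions and off-cycle edges, leaving only the two rotations. All steps are sound.

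One point worth making explicit, and it is in your favour: the paper's formal definition of a loop $m_r(i_1,\ldots,i_r;j_1,\ldots,j_r)$, taken literally, does not produce the antisymmetric cycle move when $r$ is even. Antisymmetry forces the $-1$ positions to be the transpose of the $+1$ positions, which leads to the recursion $i_{k+1}=\pi^{-2}(i_k)$ for the permutation $\pi$ recorded by the $+1$ entries, and $\pi^{-2}$ cannot act as a single $r$-cycle when $r$ is even; so for $I\geq 4$ the literal ``symmetric loops'' of degree $4$ do not exist as quasi-symmetry moves, even though the degree-$4$ cycle circulations are indispensable. Your reading of ``symmetric loop of degree $r$ with support outside the main diagonal'' as the $\pm 1$ circulation around a simple $r$-cycle of $G$ is the interpretation under which the proposition is true (and matches the paper's $I=3$ example), but it is a genuine reinterpretation of the definition in Section~2 rather than a direct consequence of it, so the care you propose to devote to that translation step is warranted.
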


\section{Comparison of several tables under quasi-independence and quasi-symmetry} \label{compa}

As outlined in the Introduction, in this section we define three log-linear models to compare two or more square tables under quasi-independence and quasi-symmetry, and we study the corresponding Markov bases. Let us consider $H$ tables ($H \geq 2$) and define a three-way contingency table $T$ by stacking the $H$ tables. Conversely, each original table is a layer of the table $T$. Let $K'=HI^2$ be the number of cells of $T$. Since the definition of the new models can be done starting form a generic log-linear model on the two-way table, we present the models is a general context, and then we write the explicit log-linear representation in the case of quasi-independence and quasi-symmetry in order to highlight the meaning of such new models in our cases.

\begin{definition}
Let $A$ be the model matrix of a log-linear model. We define three log-linear models for $T$:
\begin{itemize}
\item Under the model $M_0$ we assume that all the layers follow a common model with model matrix $A$;

\item Under the model $M_1$ we assume that each layer of the table $T$ follows a model with model matrix $A$ with its own parameters, without further constraints;

\item Under the model $M_2$ we assume that each layer of the table $T$ follows a model with model matrix $A$ with its own parameters, and with the additional constraint of fixed marginal sums over the layers.
\end{itemize}
\end{definition}

The model matrices of $M_0$, $M_1$ and $M_2$ have a simple block structure. In fact:
{\small{\[
A_{M_0}^t=
\left(
\begin{array}{c|c|c}
A^t & \cdots & A^{t}   \\
\hline
{\mathbf 1}_K &  & \\
 \hline
  & \ddots & \\
 \hline
 &  & {\mathbf 1}_K
\end{array}
\right)
\quad
A_{M_1}^t=
\left(
\begin{array}{c|c|c}
A^t & &   \\
\hline
 & \ddots & \\
 \hline
 &  & A^t
\end{array}
\right)
\quad
A_{M_2}^t=
\left(
\begin{array}{c|c|c}
A^t & &   \\
\hline
 & \ddots & \\
 \hline
 &  & A^t \\ \hline
{\mathbf I}_K & \ddots & {\mathbf I}_K
\end{array}
\right)
\]}}
where ${\mathbf 1}_K$ is a row vector of $1$'s with length $K$, ${\mathbf I}_K$ is a the identity matrix with dimensions $K \times K$ and each empty block means a block filled with $0$'s. The matrix $A_{M_2}^t$ is the $H$-th order Lawrence lifting of $A^t$ and its properties in terms of Markov and Graver bases have been studied in \cite{sturmfels:96} and \cite{santossturmfels}.

Writing explicitly the log-linear form of the three models in the case of quasi-indepen\-dence we have the equations below. For $M_0$:
\begin{equation}
(M_0) \qquad \log(p_{i,j,h}) = \mu + \mu_h + \alpha_{i}^{(X)} + \beta_{j}^{(Y)} + \gamma_{i}\delta_{i,j}
\end{equation}
with the constraint $\sum_{h = 1}^{H} \mu_h = 0$ in addition to the constraints on $\alpha_{i}^{(X)}$ and $\beta_{j}^{(Y)}$ naturally derived from the basic quasi-independence model in Eq.~(\ref{qi}). The second model $M_1$ is defined by
\begin{equation}
(M_1) \qquad \log(p_{i,j,h}) = \mu + \mu_h + \alpha_{h,i}^{(X)} + \beta_{h,j}^{(Y)} + \gamma_{i,h}\delta_{i,j}
\end{equation}
with the constraint $\sum_{h = 1}^{H} \mu_h = 0$ in addition to the constraints on $\alpha_{h,i}^{(X)}$ and $\beta_{h,j}^{(Y)}$ derived from the basic quasi-independence model in Eq.~(\ref{qi}) and valid on each layer of the table $T$. The third model $M_2$ is defined by
\begin{equation}
(M_2) \qquad \log(p_{i,j,h}) = \mu + \mu_h + \mu_{i,j} + \alpha_{i,h}^{(X)} + \beta_{j,h}^{(Y)} + \gamma_{i,h}\delta_{i,j}
\end{equation}
with the same constraints as in $M_1$ plus the additional constraints $\sum_{i=1}^I \mu_{i,j} = 0, j=1, \ldots, I$ and $\sum_{j=1}^I \mu_{i,j} = 0, i=1, \ldots, I$.

In the case of quasi-symmetry we obtain the expressions below. For $M_0$:
\begin{equation}
(M_0) \qquad \log(p_{i,j,h}) = \mu + \mu_h + \alpha_{i}^{(X)} + \beta_{j}^{(Y)} + \gamma_{i,j}
\end{equation}
with the constraint $\sum_{h = 1}^{H} \mu_h = 0$ in addition to the constraints on $\alpha_{i}^{(X)}$, $\beta_{j}^{(Y)}$ and $\gamma_{i,j}$ naturally derived from the basic quasi-symmetry model in Eq.~(\ref{qs}). The second model $M_1$ is defined by
\begin{equation}
(M_1) \qquad \log(p_{i,j,h}) = \mu + \mu_h + \alpha_{h,i}^{(X)} + \beta_{h,j}^{(Y)} + \gamma_{i,j,h}
\end{equation}
with the constraint $\sum_{h = 1}^{H} \mu_h = 0$ in addition to the constraints on $\alpha_{h,i}^{(X)}$, $\beta_{h,j}^{(Y)}$ and $\gamma_{i,j,h}$ derived from the basic quasi-symmetry model in Eq.~(\ref{qs}) and valid on each layer of the table $T$. The third model $M_2$ is defined by
\begin{equation}
(M_2) \qquad \log(p_{i,j,h}) = \mu + \mu_h + \mu_{i,j} + \alpha_{i,h}^{(X)} + \beta_{j,h}^{(Y)} + \gamma_{i,j,h}
\end{equation}
with the same constraints as in $M_1$ plus the additional constraints $\sum_{i=1}^I \mu_{i,j} = 0, j=1, \ldots, I$ and $\sum_{j=1}^I \mu_{i,j} = 0, i=1, \ldots, I$.

With a simple linear algebra argument, it easy to see that $M_0 \subset M_1 \subset M_2$. As a consequence, these models can be used in two ways. They can be applied separately, to define goodness-of-fit test to contrast the observed table with a given model, or the can be used to define a test for nested models, see e.g. \cite{agresti:13} where several examples are introduced and discussed. In the next section we will focus on tests for nested models.

Now, we compute the Markov bases for the models $M_0$, $M_1$ and $M_2$ defined above. Let ${\mathcal M}$ be a Markov basis for quasi-independence (or quasi-symmetry). In the proofs below we will make use of a distance-reducing argument, introduced in \cite{takemura|aoki:05}.

For the model $M_0$, define the following two types of moves $b$:
\begin{itemize}
\item[type 1:] fix a move $m\in \mathcal{M}$ and split it on the different layers with the condition that with the condition that each row of $m$ belongs to one layer.
\item[type 2:] choose integers $1\leq i_1 < i_2\leq I$ and $1\leq j_1 < j_2 \leq I$ and define the moves $\pm b$ where $b$ has zero coordinate except for
\[
\begin{array}{l}
b_{i_1,j_1,h_1}=1\\
b_{i_2,j_2,h_1}=-1\\
b_{i_1,j_1,h_2}=-1\\
b_{i_2,j_2,h_2}=1
\end{array}
\]
for $1\leq h_1<h_2\leq H$.
\end{itemize}
Consider as a set of moves
\[
\mathcal{M}_0=\mathcal{B}_1\cup \mathcal{B}_2
\]
where $\mathcal{B}_i$ is the set of moves of type $i$.

\begin{proposition}
The set ${\mathcal M}_0$ above is a Markov basis for the model $M_0$.
\end{proposition}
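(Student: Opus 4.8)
I need to show that any two nonnegative tables $f=(f_1,\dots,f_H)$ and $f'=(f'_1,\dots,f'_H)$ with $A_{M_0}^t f=A_{M_0}^t f'$ are joined inside the fiber by moves of $\mathcal M_0$; as in \cite{takemura|aoki:05} this can be phrased as a distance-reducing argument, but it is cleanest to produce the connecting path explicitly. From the block shape of $A_{M_0}^t$ one reads that $f$ and $f'$ lie in a common $M_0$-fiber precisely when the aggregates $\bar f:=\sum_h f_h$ and $\bar f':=\sum_h f'_h$ lie in a common fiber of the base matrix $A$ and $f,f'$ have the same layer totals $n_h:=\sum_{i,j}f_{i,j,h}=\sum_{i,j}f'_{i,j,h}$. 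The plan is to pass from $f$ to $f'$ in two stages: first turn the aggregate of $f$ into $\bar f'$ using moves of $\mathcal B_1$, then finish without touching the aggregate using moves of $\mathcal B_2$. (I take $I\ge3$; for $I\le2$ the square-table model is saturated and $\mathcal B_2$ would have to be enlarged.)

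\emph{Stage~1.} Since $\mathcal M$ is a Markov basis (Propositions~\ref{aokitakemura}--\ref{drtonexercise}), fix a path $\bar f=\bar g^{(0)},\bar g^{(1)},\dots,\bar g^{(L)}=\bar f'$ in the base-fiber with $\bar g^{(\ell+1)}=\bar g^{(\ell)}+\varepsilon_\ell m_\ell$, $m_\ell\in\mathcal M$, $\varepsilon_\ell\in\{\pm1\}$, all $\bar g^{(\ell)}\ge0$. I lift it step by step, keeping a three-way table $g^{(\ell)}$ with aggregate $\bar g^{(\ell)}$ and layer totals $(n_h)$, starting from $g^{(0)}=f$. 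The mechanism exploits that each $m_\ell$ is a loop, hence has exactly one $+1$ and one $-1$ in every row of its support: if $(i,c_i)$ is the cell of row $i$ of $\varepsilon_\ell m_\ell$ carrying the entry $-1$, then $\bar g^{(\ell)}_{i,c_i}\ge1$ (as $\bar g^{(\ell)}+\varepsilon_\ell m_\ell\ge0$), hence $g^{(\ell)}_{i,c_i,h_i}\ge1$ for some layer $h_i$; place the whole $i$-th row of $\varepsilon_\ell m_\ell$ into that layer $h_i$. The table $b$ so obtained is a move of type~$1$; the placed rows have pairwise disjoint supports and only entries $\ge1$ are lowered, so $g^{(\ell)}+b\ge0$, and $g^{(\ell)}+b$ has aggregate $\bar g^{(\ell+1)}$ and layer totals $(n_h)$. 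After $L$ steps I reach $f''$ in the same $M_0$-fiber with $\sum_h f''_h=\bar f'$, joined to $f$ by moves of $\mathcal B_1$.

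\emph{Stage~2.} Now $f''$ and $f'$ share the aggregate $\bar f'$ and the layer totals $(n_h)$. Flattening the first two indices, regard each as an $I^2\times H$ two-way table whose rows are the cells $(i,j)$ and whose columns are the layers $h$; then both have row sums $\bar f'_{i,j}$ and column sums $n_h$, i.e.\ they lie in a common fiber of the independence model of an $I^2\times H$ table. The elements of $\mathcal B_2$ are exactly the basic moves ($2\times2$ minors) of this flattened table whose two active rows are cells $(i_1,j_1),(i_2,j_2)$ lying in distinct rows and distinct columns of the original square layout, so it is enough to check that these already connect the fiber. This follows from two classical facts: (i) for a two-way independence model one may restrict the row-index pairs of the basic moves to the edges of any connected graph on the rows, keeping all column-index pairs (see, e.g., \cite{drton|sturmfels|sullivant:09}); and (ii) the graph on the $I^2$ cells in which two cells are adjacent iff they share neither a row nor a column is connected for $I\ge3$, of diameter at most two. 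Concatenating the two stages gives a path from $f$ to $f'$ within the fiber that uses only moves of $\mathcal M_0=\mathcal B_1\cup\mathcal B_2$, so $\mathcal M_0$ is a Markov basis for $M_0$.

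The step I expect to be the main obstacle is Stage~2: one has to be certain that forbidding cell-pairs that share a row or a column does not disconnect the independence fiber, and this is exactly where $I\ge3$ enters, through the connectivity of the auxiliary graph in (ii); for small $I$ one genuinely needs the extra basic moves. A smaller but real point is that the lifting in Stage~1 really outputs a move of type~$1$ — this rests on every element of $\mathcal M$ being a loop, so that each of its rows carries a single $+1$ and a single $-1$; a base model whose Markov basis contained a non-loop move would force $\mathcal B_1$ to be enlarged, e.g.\ by permitting a single row to be split among several layers. One should also verify the characterization of the $M_0$-fiber used at the start, which is the elementary linear algebra of the block decomposition of $A_{M_0}^t$.
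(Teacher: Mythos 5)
Your Stage~1 is correct, and your overall strategy (an explicit two-stage path: first fix the aggregate with lifted base moves, then fix the distribution over layers) is genuinely different from the paper's proof, which runs a single distance-reducing argument in the style of \cite{takemura|aoki:05}, first clearing discrepancies on the diagonal with type-2 moves and then lifting one distance-reducing loop of the base model layer by layer. The gap is in Stage~2, and it is a real one: your fact~(i) is false. For the independence model of a two-way table \emph{every} basic move is indispensable, i.e.\ lies in every Markov basis: for rows $r\neq s$ and columns $c\neq d$, the fiber with row sums equal to $1$ on $\{r,s\}$ and $0$ elsewhere and column sums equal to $1$ on $\{c,d\}$ and $0$ elsewhere has exactly two elements, and the only applicable move is the basic move on $(r,s;c,d)$ --- any basic move touching another row or column must decrement an entry that is forced to be $0$. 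Hence one cannot restrict the row-index pairs of the basic moves to the edges of a connected graph; the connectivity of your auxiliary graph in~(ii) only shows that the restricted moves generate the right lattice, which is strictly weaker than Markov connectivity because intermediate tables must remain nonnegative. Concretely, under your reading of $\mathcal B_2$ the proposition itself would fail: for $I=3$, $H=2$ and quasi-symmetry, take $f$ with a single $1$ in cell $(1,2)$ of layer $1$ and a single $1$ in cell $(1,3)$ of layer $2$, and $f'$ with these two counts swapped between the layers. These lie in the same $M_0$-fiber, yet no move of $\mathcal B_1$ and no type-2 move on a pair of cells in distinct rows \emph{and} distinct columns is applicable to either table.

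The way out is that $\mathcal B_2$ must be taken to consist of the basic moves for \emph{all} pairs of distinct cells $(i_1,j_1)\neq(i_2,j_2)$, not only those sharing neither a row nor a column; this is evidently what the paper intends, since its own proof applies type-2 moves to a diagonal cell paired with an arbitrary cell, with no ordering or disjointness restriction. With that reading your Stage~2 becomes immediate: after flattening, $\mathcal B_2$ is exactly the full set of $2\times2$ basic moves of the $I^2\times H$ independence model, which is the classical Markov basis for two-way independence. Your two-stage construction then gives a complete, and arguably cleaner, proof; but as written, with the restricted $\mathcal B_2$ and the appeal to fact~(i), the argument does not go through.
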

\begin{proof}
Let  $v\in \mathbb{Z}^{I\times I \times H}$. Then $v\in \mbox{Ker}_{\mathbb{Z}}(A^t_{M_0})$ if and only
\begin{itemize}
\item[i)] $A^t \left(\sum_{h=1}^H v_{\bullet ,\bullet ,h}\right)=0$ that is  $\sum_{h=1}^H v_{\bullet ,\bullet ,h}\in \mbox{Ker}_{\mathbb{Z}}(A^t)$;
\item[ii)]  $\sum_{i,j=1}^{I} v_{i,j,h}=0$ for all $1\leq h \leq H$
\end{itemize}
where ii) follows directly looking at the vectors of ones $ {\mathbf 1}_K$ in the definition of $A^t_{M_0}$.

Let $u,v$ vectors with same value of the sufficient statistic, i.e. $A_{M_0}^t u =A^t_{M_0} v$. We want to prove that there exists $b\in \mathcal{B}$ such that
\[u+b\geq 0
\]
and
\[
||u+b-v||_1<||u-v||_1.
\]

Since $u$ and $v$ are distinct with $A^t_{M_0}u=A^t_{M_0}v$ then there exists a positive entry in $u-v$, say $u_{i_1,j_1,h_1}-v_{i_1,j_1,h_1}>0$. Suppose that such entry belongs to the main diagonal, i.e., $i_1=j_1$. Then there exists another layer $h_2$ such that $u_{i_1,j_1,h_1}-v_{i_1,j_1,h_1}<0$. Moreover, by the condition ii), there exists a positive entry of $u-v$ in the layer $h_2$ and a negative entry in the layer $h_1$: $u_{i_2,j_2,h_1}-v_{i_2,j_2,h_1}<0$ and $u_{i_3,j_3,h_2}-v_{i_3,j_3,h_2}>0$ for some indices $i_2,i_3,j_2,j_3$. Now consider the move $b \in {\mathcal B}_2$ defined by
\[
\begin{array}{l}
b_{i_1,j_1,h_1}=-1\\
b_{i_1,j_1,h_2}=+1\\
b_{i_3,j_3,h_2}=-1\\
b_{i_3,j_3,h_1}=+1
\end{array}
\]
that satisfies $||u-v||_1>||u+b-v||_1$.

Thus, we can consider only the case where $u-v$ is zero on the main diagonal of all layers. Let $U$ and $V$ be the sum over the $H$ layers of $u$ and $v$, respectively. By condition i), $U-V\in\mbox{Ker}_{\mathbb{Z}}(A^t)$. By Propositions \ref{aokitakemura} and \ref{drtonexercise}, there exists a distance reducing move $m$ which is a df $1$ loop (in the case of quasi-independence) or a symmetric loop (in the case of quasi-symmetry):
\[
||U+m-V||_1 < ||U-V||_1
\]
Let $\mathcal{I}=\{(i_1,j_1),(i_1,j_2),\dots, (i_t,j_t), (i_t,j_1)\}$ be the set of the indices where $m$ has nonzero entries. Without loss of generality we can suppose that $m_{i_1,j_1}=-1$. The move $m$ is defined such that
\[
\begin{array}{l}
m_{\alpha,\beta}= +1 \qquad \mbox {if and only if} \qquad U_{\alpha,\beta}-V_{\alpha,\beta} < 0 \\
m_{\alpha,\beta}= -1 \qquad \mbox {if and only if} \qquad U_{\alpha,\beta}-V_{\alpha,\beta} > 0
\end{array}
\]
for all $(\alpha,\beta) \in {\mathcal I}$, except at most for the last index $(i_t,j_1)$, where $m_{i_t,j_1}=+1$.

Now, we split $m$ in the $H$ layers. For this aim, notice that $U_{\alpha,\beta}-V_{\alpha,\beta} < 0$ implies that there exists a layer $h$ such that $u_{\alpha,\beta,h}-v_{\alpha,\beta,h}<0$, and similarly $U_{\alpha,\beta}-V_{\alpha,\beta} > 0$ implies that there exists a layer $h$ such that $u_{\alpha,\beta,h}-v_{\alpha,\beta,h}>0$. We then split $m$ by putting the $+1$ in the layer with a negative entry of $u-v$ and $-1$ in the layer with a positive entry of $u-v$. This can be done for all $(\alpha,\beta) \in {\mathcal I} \setminus {(i_t,j_1)}$. The last $+1$ can be assigned to an arbitrary layer. Thus we have a matrix $b$ defined by a sequence of indices
\[
{\mathcal I}' = \{(i_1,j_1,h_{11}),(i_1,j_2,h_{12}),\dots, (i_t,j_t,h_{tt}), (i_t,j_1,h_{t1})  \}
\]
Now, we arrange the entries in such a way that the entries in each row belong to the same layer. This will ensures that the split move $b$ satisfies the condition ii) above.

Let us consider the first row and the corresponding indices $(i_1,j_1,h_{11}),(i_1,j_2,h_{12})$ where $b$ is nonzero. By construction, $u_{i_1,j_1,h_{11}} - v_{i_1,j_1,h_{11}}>0$ and $u_{i_1,j_2,h_{12}} - v_{i_1,j_2,h_{12}}<0$. If $h_{11}=h_{22}$ we have concluded, otherwise there is an entry $(\alpha,\beta,h_{12})$ in the layer $h_{12}$ such that $u_{\alpha,\beta,h_{12}}-v_{\alpha,\beta,h_{12}}>0$. Now, consider the preliminary move $b^{(p)} \in {\mathcal B}_2$ defined by
\[
\begin{array}{l}
b^{(p)}_{\alpha,\beta,h_{12}}=-1\\
b^{(p)}_{i_1,j_1,h_{12}}=+1\\
b^{(p)}_{i_1,j_1,h_{11}}=-1\\
b^{(p)}_{\alpha,\beta,h_{11}}=+1 .
\end{array}
\]
If $u_{i_1,j_1,h_{12}}-v_{i_1,j_1,h_{12}}<0$, then the move $b^{(p)}$ is distance-reducing and we have concluded. Otherwise, $||u+b^{(p)}-v||_1 = ||u-v||_1$ and $u_{i_1,j_1,h_{12}}+b^{(p)}_{i_1,j_1,h_{12}}-v_{i_1,j_1,h_{12}}>0$, and therefore we can move the $-1$ into the layer $h_{12}$. Now, the remaining $+1$'s in $b$ can be simply moved, row by row, in the same layer of the corresponding $-1$ and this is enough to conclude: there is a move $b \in {\mathcal B_1}$ such that $||u+b^{(p)}+b-v||_1 < ||u-v||_1$.
\end{proof}

\begin{example}
If $I=3$ and $H=2$ an example of move of type $\mathcal{B}_1$, for both the quasi-independence and the quasi-symmetry models, is drawn in Fig.~\ref{fig_1}.
\begin{figure}[htbp]
\centering
\makebox{\pgfdeclareimage[height=4.56cm, width=4cm]{figM0}{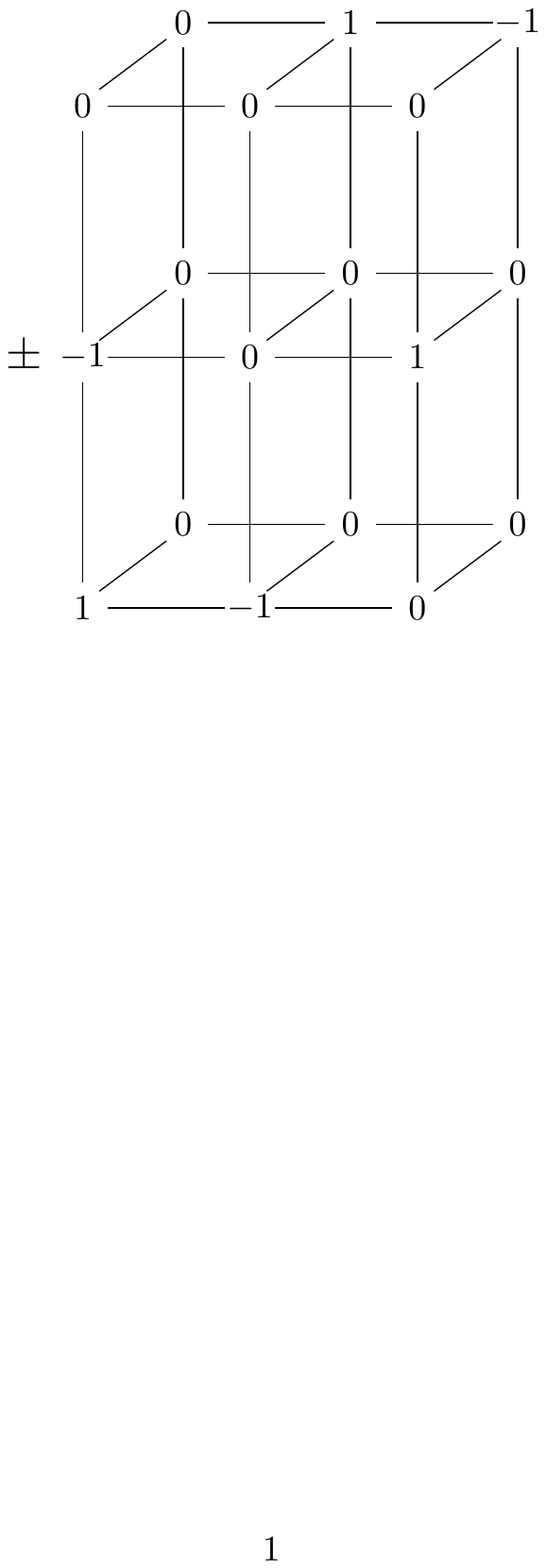}
\pgfuseimage{figM0}}
\caption{\label{fig_1}An example of split move of type ${\mathcal B}_1$ for the model $M_0$.}
\end{figure}
\end{example}

We remark that, while in the quasi-independence model the diagonal cells are fixed, this is no longer true when two or more tables are compared under the model $M_0$. In fact, the moves in ${\mathcal B}_2$ act also on the diagonal cells.

The model $M_1$ is easy to analyze, and the moves are given by the following proposition.

\begin{proposition}
The set of moves
\[
\mathcal{M}_1=\{(m,0,\dots,0), (0,m,0,\dots, 0), \dots, (0,\dots, 0, m) :  m \in {\mathcal M}\}
\]
is a Markov basis for $M_1$.
\end{proposition}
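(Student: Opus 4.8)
The plan is to exploit the fact that the model matrix $A^t_{M_1}$ is block-diagonal, so the kernel lattice splits as a direct sum over the layers. Concretely, for $v \in \mathbb{Z}^{I \times I \times H}$ one has $A^t_{M_1} v = 0$ if and only if $A^t v_{\bullet,\bullet,h} = 0$ for every $h = 1, \ldots, H$, i.e. each layer slice of $v$ lies individually in $\mathrm{Ker}_{\mathbb{Z}}(A^t)$. This is immediate from the definition of $A^t_{M_1}$ as the matrix with $H$ copies of $A^t$ placed on the diagonal and zeros elsewhere.

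Given this, I would argue as follows. Take $u, v \in \mathbb{N}^{I \times I \times H}$ with $A^t_{M_1} u = A^t_{M_1} v$ and $u \neq v$. Since $u - v$ is nonzero, there is some layer $h_0$ with $u_{\bullet,\bullet,h_0} \neq v_{\bullet,\bullet,h_0}$, and by the block structure $u_{\bullet,\bullet,h_0} - v_{\bullet,\bullet,h_0} \in \mathrm{Ker}_{\mathbb{Z}}(A^t)$ while simultaneously $u_{\bullet,\bullet,h_0}, v_{\bullet,\bullet,h_0} \in \mathbb{N}^{I \times I}$ have the same value of the sufficient statistic $A^t$. Because $\mathcal{M}$ is a Markov basis for the base model (quasi-independence or quasi-symmetry), there exists $m \in \mathcal{M}$ with $u_{\bullet,\bullet,h_0} + m \geq 0$ and $\|u_{\bullet,\bullet,h_0} + m - v_{\bullet,\bullet,h_0}\|_1 < \|u_{\bullet,\bullet,h_0} - v_{\bullet,\bullet,h_0}\|_1$; here I use that a Markov basis is a distance-reducing set in the $\ell_1$ norm, which is equivalent to connectivity of all fibers. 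Lifting $m$ to the move $(0,\dots,0,m,0,\dots,0) \in \mathcal{M}_1$ supported on layer $h_0$, the nonnegativity is preserved (the other layers of $u$ are untouched) and $\|u + (0,\dots,m,\dots,0) - v\|_1 = \|u - v\|_1 - (\text{positive quantity}) < \|u - v\|_1$. Iterating this step drives the distance to zero, which shows $\mathcal{M}_1$ connects every fiber, hence is a Markov basis for $M_1$.

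The bookkeeping is entirely routine: the only thing to verify is that the lift of a single-layer move does not disturb the other layers and that the $\ell_1$ distance decomposes as a sum over layers, so reducing the distance in one layer reduces the total distance. The main (and only mild) obstacle is to phrase the distance-reduction / fiber-connectivity correspondence cleanly, since Propositions~\ref{aokitakemura} and \ref{drtonexercise} guarantee Markov bases for the base models and one should recall explicitly that "Markov basis" and "distance-reducing system of moves" are interchangeable so that the induction is legitimate. No further subtlety arises, precisely because $M_1$ imposes no constraint linking the layers — in sharp contrast to $M_0$ and $M_2$.
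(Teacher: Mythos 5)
Your overall strategy is the right one, and it is essentially the argument the paper has in mind: the authors give no proof at all, remarking only that ``the model $M_1$ is easy to analyze,'' precisely because $A^t_{M_1}$ is block-diagonal, so the fiber of $M_1$ is the Cartesian product of the layer fibers and everything reduces to the base model layer by layer. Your identification of this as the key point, and your observation that a move supported on a single layer cannot disturb nonnegativity or the sufficient statistic on the other layers, is correct.

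There is, however, one step whose justification is wrong as stated: you assert that being a Markov basis is \emph{equivalent} to being a distance-reducing set of moves in the $\ell_1$ norm. Only one implication holds --- a distance-reducing set is a Markov basis, but a Markov basis need not be distance-reducing (a connecting path may have to increase $\|u-v\|_1$ before decreasing it). So the sentence ``there exists $m \in \mathcal{M}$ with $u_{\bullet,\bullet,h_0}+m\geq 0$ and $\|u_{\bullet,\bullet,h_0}+m-v_{\bullet,\bullet,h_0}\|_1 < \|u_{\bullet,\bullet,h_0}-v_{\bullet,\bullet,h_0}\|_1$'' does not follow from $\mathcal{M}$ being a Markov basis alone, and the induction ``iterate until the distance is zero'' is not licensed. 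The repair is easy and comes in two flavors: either (a) drop distance reduction entirely --- fiber connectivity for the base model gives, for each layer $h$, a nonnegative path from $u_{\bullet,\bullet,h}$ to $v_{\bullet,\bullet,h}$ using moves of $\mathcal{M}$, and concatenating these paths one layer at a time (lifting each move to the corresponding element of $\mathcal{M}_1$) connects $u$ to $v$; or (b) invoke the fact that the specific bases of Propositions~2.1 and~2.2 (df~$1$ loops, symmetric loops) are known to be distance-reducing, which is what the paper itself uses in its proof for $M_0$ --- but then you are using a property of those particular bases, not a general equivalence. With either fix the proof is complete.
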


\begin{example}
If $I=3$ and  $H=2$ the moves for the model ${M_1}$ for quasi-independence and quasi-symmetry are drawn in Fig.~\ref{fig_2}.
\begin{figure}[htbp]
\centering
\makebox{\begin{tabular}{ccc}
\pgfdeclareimage[height=4.56cm, width=4cm]{figM1a}{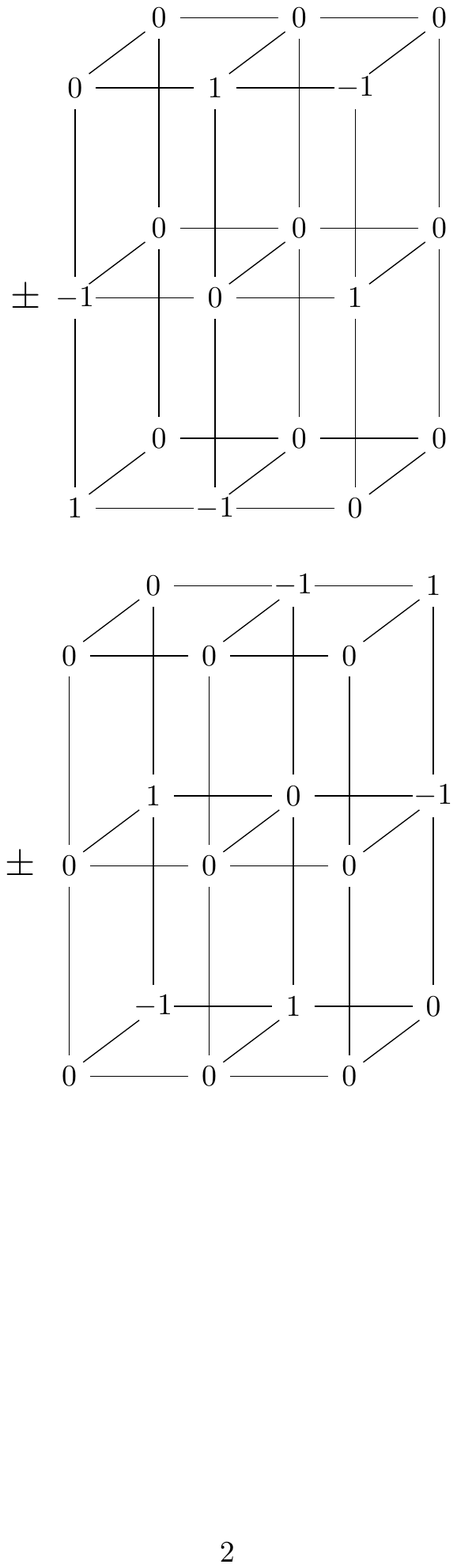}
\pgfuseimage{figM1a}& $\quad$&\pgfdeclareimage[height=4.56cm, width=4cm]{figM1b}{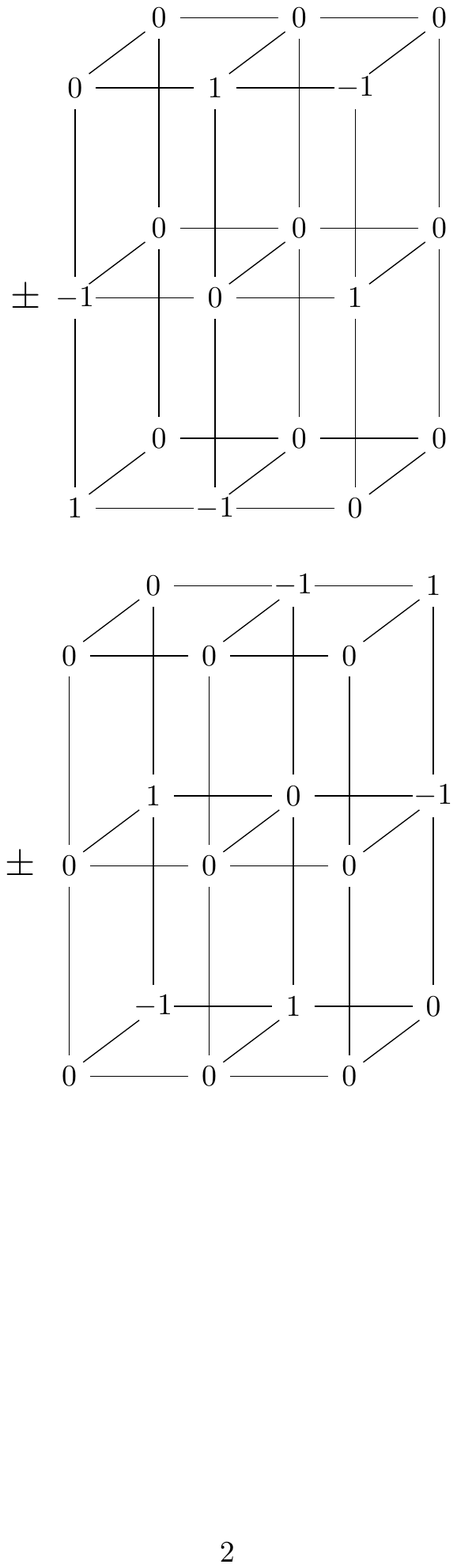}
\pgfuseimage{figM1b}\end{tabular}}
\caption{\label{fig_2}The moves for the model $M_1$.}
\end{figure}
\end{example}

For the model $M_2$, we restrict to the case of two layers. Let us consider a Graver basis ${\mathcal G}$ for the model matrix $A^t$, and consider the set of moves
\[
{\mathcal L}=\left\{
(m, -m) : m \in {\mathcal G}
\right\} \, .
\]
\begin{remark}
For quasi-symmetry, the Markov basis from Proposition \ref{drtonexercise} is also a Graver basis, while in the case of quasi-independence we need to consider the set of all df $1$ loops of degree $2, \ldots, I$.
\end{remark}

\begin{proposition}
The set ${\mathcal L}$ is a Graver basis (and thus also a Markov basis) for the model $M_2$.
\end{proposition}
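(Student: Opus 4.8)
The key fact I would invoke is the characterization of the $H$-th Lawrence lifting $\Lambda(A^t)$: a standard result (see \cite{sturmfels:96}, \cite{santossturmfels}) says that for the $2$-fold Lawrence lifting, the set of moves $\{(m,-m) : m \in \mathcal{G}\}$, with $\mathcal{G}$ a Graver basis of $A^t$, is itself a \emph{Graver basis} of the lifted matrix $A^t_{M_2}$. So in principle the statement is an instance of this known theorem, and the proof should consist of (i) verifying that the matrix $A^t_{M_2}$ displayed above is exactly the $2$-fold Lawrence lifting of $A^t$ in the relevant sense, and (ii) combining this with the Remark, which tells us what $\mathcal{G}$ is concretely for quasi-independence and quasi-symmetry. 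For quasi-symmetry, $\mathcal{G}$ is the set of symmetric loops of degree $3,\dots,I$ by Proposition~\ref{drtonexercise}; for quasi-independence, $\mathcal{G}$ is the set of all df $1$ loops of degrees $2,\dots,I$ (note this is larger than the minimal Markov basis, which used only degrees $2$ and $3$).

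**Key steps.** First I would recall the definition: the Lawrence lifting of $A^t$ stacks two copies of $A^t$ block-diagonally and appends the identity coupling $(\mathbf{I}_K \mid \mathbf{I}_K)$, which is precisely the block form of $A^t_{M_2}$ with $H=2$. Hence $\mathrm{Ker}_{\mathbb{Z}}(A^t_{M_2}) = \{(w,-w) : w \in \mathrm{Ker}_{\mathbb{Z}}(A^t)\}$: the bottom identity block forces the second layer to be the negative of the first, and the two diagonal $A^t$ blocks then each impose $A^t w = 0$. Second, I would check that $\mathcal{L}$ consists of genuine moves, i.e. $(m,-m) \in \mathrm{Ker}_{\mathbb{Z}}(A^t_{M_2})$ for $m \in \mathcal{G}$, which is immediate from the previous line. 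Third, and this is the heart, I would establish that $\mathcal{L}$ is a Graver basis, i.e. that every nonzero $(w,-w)$ in the kernel with $w \ne 0$ has a \emph{conformal} decomposition into elements of $\mathcal{L}$: given such an element, apply the defining property of the Graver basis $\mathcal{G}$ of $A^t$ to write $w = \sum_\ell m^{(\ell)}$ conformally with $m^{(\ell)} \in \mathcal{G}$; then $(w,-w) = \sum_\ell (m^{(\ell)}, -m^{(\ell)})$, and this decomposition is conformal in $\mathbb{Z}^{I \times I \times 2}$ because sign agreement holds in the first layer by construction and is simply inherited (with flipped sign, which is still conformal coordinatewise) in the second layer. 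Since a Graver basis is in particular a Markov basis, the final clause follows. I would close by invoking the Remark to state the explicit form of $\mathcal{L}$ in the two cases of interest.

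**Main obstacle.** The only real subtlety is making the conformality bookkeeping precise: one must be careful that "conformal in the lifted space" genuinely reduces to "conformal in the base space," which is true here precisely because the second layer is a faithful sign-reversed copy of the first, so no new sign cancellations can be hidden across layers. This is exactly the content of the classical Lawrence-lifting lemma, so rather than reprove it from scratch I would cite \cite{sturmfels:96} (or \cite{santossturmfels}) for the statement that Graver bases of $A^t$ and of its $2$-fold Lawrence lifting are in this explicit correspondence, and spend the remaining space only on verifying that our $A^t_{M_2}$ is literally that lifting and on spelling out $\mathcal{G}$ via Propositions~\ref{aokitakemura} and~\ref{drtonexercise}. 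A secondary point worth a sentence: the restriction to $H=2$ is essential, since for $H \ge 3$ the analogous "higher Lawrence lifting" does \emph{not} admit such a simple description of its Graver basis — this is why the proposition is stated only for two tables, consistent with the remarks promised in the Introduction about the general case.
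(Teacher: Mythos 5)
Your proposal is correct and follows essentially the same route as the paper, whose entire proof is a citation of Theorem~7.1 in \cite{sturmfels:96} on Graver bases of Lawrence liftings; you identify exactly that theorem and correctly observe that the block matrix $A^t_{M_2}$ with $H=2$ is literally the $2$-fold Lawrence lifting of $A^t$. The additional detail you supply (the kernel description $\{(w,-w)\}$ and the conformality bookkeeping) is a sound unpacking of the cited result rather than a different argument, and your closing remark about why $H=2$ is essential matches the paper's subsequent discussion of higher Lawrence configurations.
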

\begin{proof}
This follows from Theorem $7.1$ in \cite{sturmfels:96}.
\end{proof}

\begin{example}
For example, if $I=3$ the are only two moves for the model $M_2$ for both the quasi-independence and the quasi-symmetry models. They are drawn in Fig.~\ref{fig_3}.
\begin{figure}[htbp]
\centering
\makebox{\pgfdeclareimage[height=4.56cm, width=4cm]{figM2}{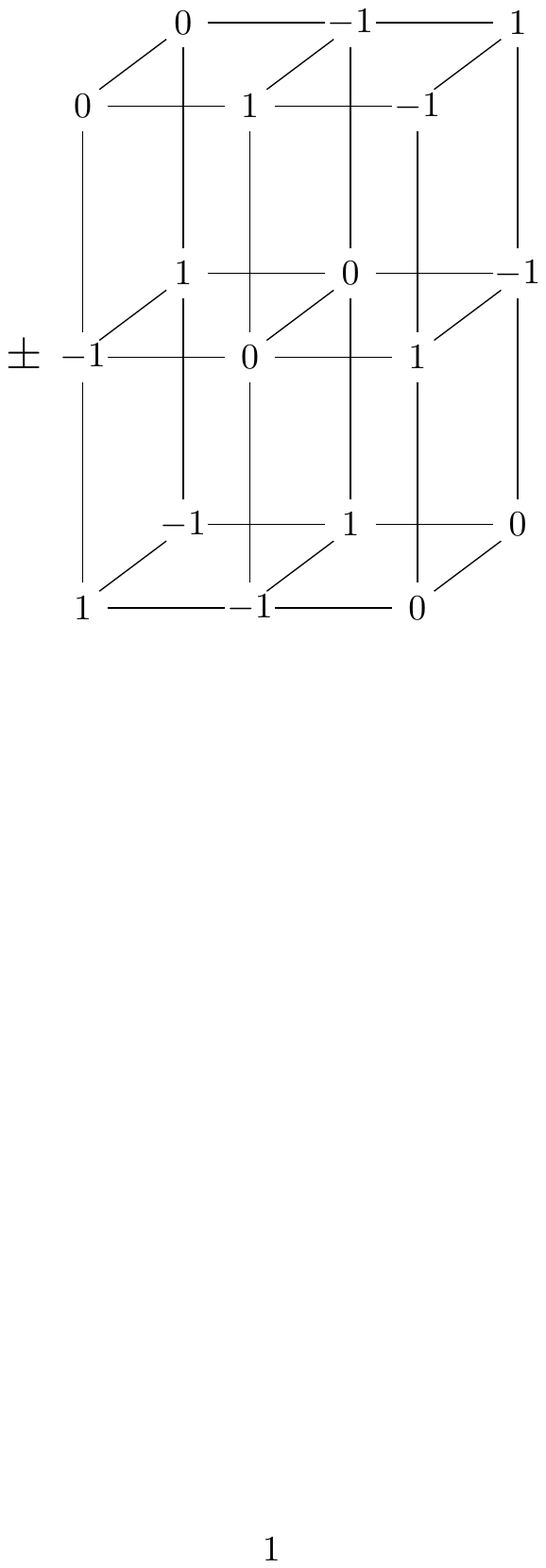}
\pgfuseimage{figM2}}
\caption{\label{fig_3}The moves for the model $M_2$.}
\end{figure}
\end{example}

For more than $2$ layers (i.e., for higher Lawrence configurations), the Markov basis for $M_2$ may be computed through {\tt 4ti2} \cite{hemmecke|hemmecke|malkin:05}, but the number of moves increases rapidly with $I$ and $H$. A valid alternative in this case is to run the Markov chain of the Diaconis-Sturmfels algorithm without a Markov basis, as described in Chapter $16$ of \cite{aoki:12}.

\section{Examples} \label{example}

In this section we present two numerical examples where the comparison of two quasi-symmetry tables may be used. Before introducing the two numerical examples, we briefly recall the Diaconis-Sturmfels algorithm, by adapting the notation to the case of a test for nested models to contrast model $M_0$ inside the model $M_1$. For details on the Diaconis-Sturmfels algorithm, see \cite{drton|sturmfels|sullivant:09}.

Let $f$ be the observed table of counts, and write $f$ as a vector of length $K'$ according to the row labels of $A_{M_0}$. Moreover, let $p$ be the vector of probabilities associated to $f$. The test for nested models with null hypothesis $H_0: \ p \in M_0 \subset M_1$ versus $H_1: \ p \in M_1$ can be done using the log-likelihood ratio statistic
\[
G^2 = 2 \sum_{k=1}^{K'} f_k \log \left( \frac {\hat f_{0k}} {\hat
f_{1k}} \right) \, ,
\]
where $\hat f_{0k}$ and $\hat f_{1k}$ are the maximum likelihood
estimates of the expected cell counts under the models $M_0$ and $M_1$
respectively. In the asymptotic theory, the value of $G^2$ must be compared with the
quantiles of the chi-square distribution with the appropriate number of degrees of freedom, depending on the dimensions of the table.

We use here the Diaconis-Sturmfels algorithm based on a Markov basis ${\mathcal M}_0$ of the model $M_0$. Given the observed table $f$, its reference set under $M_0$ is
\[
{\mathcal F}(f) = \left\{ f' \in {\mathbf N}^{K'} \ : \ A^t_{M_0} f' = A^t_{M_0} f
\right\}
\]
and it is connected using the algorithm below. At each step:
\begin{enumerate}
\item let $f$ be the current table;

\item choose with uniform probability a move $m \in {\mathcal
M}_0$;

\item define the candidate table as $f_+=f + m$;

\item generate a random number $u$ with uniform distribution over
$[0,1]$. If $f_+ \geq 0$ and
\[
\min \left\{ 1 , \frac {\mathcal H(f_+)}  {\mathcal H(f)} \right\}
> u
\]
then move the chain in $f_+$; otherwise stay at $f$. Here ${\mathcal H}$ denotes the hypergeometric distribution on ${\mathcal F}(t)$.
\end{enumerate}
After an appropriate burn-in-period and taking only tables at fixed times to reduce correlation between the sampled tables, the proportion of sampled tables with test statistics greater than or equal to the test statistic of the observed one is the Monte Carlo approximation of
$p$-value of the log-likelihood ratio test. The results in this section are based on Monte Carlo samples of size $10,000$, and the corresponding asymptotic $p$-values are displayed for comparison.

\subsection{Rater agreement data}

The data in Tab.~\ref{tab:1ra} summarize the results of a medical experiment involving the evaluation of agreement among raters. Two independent raters are asked to assign a set of medical images to $5$ different stages of a disease (levels $1$ to $5$ in increasing order of severity of the disease). To check the relevance of a thorough training of the raters, a first set of images has been classified before the training session, while a second set has been classified after the training session.

\begin{table}[htbp]
\caption{\label{tab:1ra}Rater agreement data. Columns represent the grading assigned by the first rater, rows represent the grading assigned by the second rater. The data in the left panel have been collected before the training session, the data in the right panel have been collected after the training session.}
\centering
\begin{tabular}{ccccccccccccc}
\hline\noalign{\smallskip}
 & $1$ & $2$ & $3$ & $4$ & $5$ & $ \qquad $& & $1$ & $2$ & $3$ & $4$ & $5$   \\
\noalign{\smallskip}\hline\noalign{\smallskip}
$1$ & 10 & 2 & 1 & 4 & 0 & & $1$ & 16 & 5 & 1 & 0 & 0 \\
$2$ & 4 & 8 & 4 & 1 & 1 & & $2$ & 0 & 15 & 2 & 1 & 0 \\
$3$ & 0 & 0 & 10 & 3 & 1 & & $3$ & 1 & 3 & 14 & 1 & 1 \\
$4$ & 1 & 1 & 4 & 11 & 0 & & $4$ & 0 & 2 & 0 & 14 & 3 \\
$5$ & 0 & 1 & 3 & 3 & 10 & & $5$ & 0 & 2 & 0 & 3 & 14 \\
\noalign{\smallskip}\hline\noalign{\smallskip}
\end{tabular}
\end{table}

We use the quasi-symmetry model, and the Markov basis ${\mathcal M}_0$ for the model $M_0$ consists of $2 \cdot 1004$ moves (i.e., $1004$ moves, each of them with the two signs). First, we run two exact tests for the goodness-of-fit of the two tables separately under quasi-symmetry, and we obtain exact $p$-values equal to $0.912$ and $0.791$ respectively ($G^2= 1.578$ and $G^2=4.303$ respectively, with $7$ df). Running the test for a unique quasi-symmetry model described in the previous section, the exact test gives a $p$-value equal to $0.029$ ($G^2=30.589$ with $18$ df, corresponding to an asymptotic $p$-value equal to $0.014$). While both the layers fit a quasi-symmetry model very well, they do not fit a common quasi-symmetry model. This means that there are significant differences in the classification of the medical images before and after the training.

\subsection{Social mobility data}

As a second numerical example we consider the data reported in Tab.~\ref{tab:1sm} (adapted from \cite{breen07} and originally collected during the ``Italian Household Longitudinal Survey'') where the inter-generational social mobility has been recorded on a sample of $4,343$ Italian workers in $1997$. The data take into account the gender, and thus we have separate tables for men and women. There are $4$ categories of workers. $A$: ``High level professionals''; $B$: ``Employees and commerce''; $C$: ``Skilled working class and artisans''; $D$: ``Unskilled working class''. In \cite{breen07} these data are analyzed extensively with a thorough presentation of a lot of models to describe special patterns of mobility. Here we merely use the simplified version displayed in Tab.~\ref{tab:1sm} to show the practical applicability of the methodology introduced in Sect.~\ref{compa} also in this context.

\begin{table}[htbp]
\caption{\label{tab:1sm}Table of social mobility in Italy (1997). Columns represent the father's occupation, rows represent the son's (or daughter's) occupation. Male respondents in the left panel, female respondents in the right panel.}
\centering
\begin{tabular}{ccccccccccc}
\hline\noalign{\smallskip}
 & $A$ & $B$ & $C$ & $D$ & $\qquad$ & & $A$ & $B$ & $C$ & $D$    \\
\noalign{\smallskip}\hline\noalign{\smallskip}
$A$ & 172 & 31 & 31 & 28 & & $A$  & 137   & 52  & 29  & 15 \\
$B$ & 108 & 49 & 24 & 46 & & $B$  & 78    & 46  & 14  & 23 \\
$C$ & 174 & 84 & 301 & 272 & & $C$ & 142 & 100  & 124  & 145 \\
$D$ & 225 & 148 & 236 & 664 & & $D$ & 164 & 181 & 141  & 35 \\
\noalign{\smallskip}\hline\noalign{\smallskip}
\end{tabular}
\end{table}

We use again the quasi-symmetry model, and the Markov basis ${\mathcal M}_0$ for the model $M_0$ is formed by $2 \cdot 200$ moves. If we consider the two layers separately, we obtain exact $p$-values are equal to $0.051$ and $0.088$ respectively ($G^2= 6.703$ and $G^2=8.279$ respectively, with $3$ df). Although this fit may appear weak, nevertheless the situation dramatically changes when considering the test for nested models. Running the test for a unique quasi-symmetry model, the exact test produces a $p$-value equal to $0$ ($G^2=112.687$ with $12$ df, corresponding to an asymptotic $p$-value less than $10^{-15}$), meaning that there is a strong departure from the null hypothesis. Combining these results, one can conclude that the two genders present strong differences in terms of patterns of mobility.


\end{document}